\newtheorem{theorem}{Theorem}[section]
\newtheorem{definition}{Definition}
\newtheorem{lemma}{Lemma}
\newtheorem{cor}{Corollary}
\theoremstyle{remark}
\newcommand{\be}{\begin{enumerate}}
\newcommand{\ee}{\end{enumerate}}
\newcommand{\beq}{\begin{equation}}
\newcommand{\eeq}{\end{equation}}
\def\N{{\mathbb{N}}}
\def\Z{{\mathbb{Z}}}
\def\Q{{\mathbb{Q}}}
\def\A{{\mathcal{A}}}
\renewcommand{\wr}{\,{\rm wr}\,}
\theoremstyle{definition}
\theoremstyle{remark}
\title{The Diophantine problem in  iterated wreath products of free abelian groups is undecidable}
\author{Olga Kharlampovich\footnote{CUNY, Graduate Center and Hunter College} , Alexei Miasnikov\footnote{Stevens Institute of Technology}} 
\date{}
\begin{document}

\maketitle

\tableofcontents

\section{Itroduction}

It is known that  the first-order theory of any  finitely generated solvable group is decidable if and
only if the group is virtually abelian: Ershov proved this for nilpotent groups \cite{E},  Romanovskii   generalized it to the polycyclic case \cite{Rom}, and finally, Noskov   established the result  for all finitely generated solvable groups \cite{Noskov}.  

For the universal theories of solvable groups the situation is much less clear. In 1998 Chapuis proved that free metabelian groups have decidable universal theory \cite{C}. The question whether the universal theory of free solvable groups of class $ \geq 3$ is decidable is still open, though it is undecidable provided that the Diophantine problem for $\Q$ is undecidable \cite{C}. 
Equations are particular types of universal formulas, but they allowed to have constants, so the Diophantine problem in a group could be harder than its universal theory. Thus, the Diophantine problem in free solvable non-abelian groups is undecidable: Romankov showed this for the metabelian case \cite{Rom}, and  Garreta, Myasnikov, Ovchinnikov - for the general one \cite{GMO} (though Romankov proved undecidability of single equations, which is harder).

Iterated wreath products  of free abelian groups in some respect are similar to free solvable groups. Indeed, from the algebraic view-point there is so-called Magnus embedding of free solvable groups $S$ into iterated wreath products $T$ of free abelian groups, that allows to solve various problems in $S$ by reduction to $T$.

but the universal theories of iterated wreath products of free abelian groups are decidable \cite{C}.  
However, it was shown in 2012 by Myasnikov and Romanovskii  that the Diophantine problem in iterated wreath products  of $k\geq 2$ infinite cyclic groups is undecidable \cite{MR} (the authors stated the result as undecidability of the universal theories of such groups with constants in the language, but the proof is about the Diophantine problem). In particular, the Diophantine problem in the group $\Z \wr \Z$ is undecidable.  In 2024 Dong using a different argument  also proved  that the Diophantine problem in $\Z \wr \Z$ is undecidable.

In this paper we prove (Theorem \ref{th:general}) that the Diophantine problem in  iterated restricted wreath products 
$$G = \Z^{m_k} \wr(\Z^{m_{k-1}} \wr (\ldots (\Z^{m_3} \wr (\Z^{m_2} \wr \Z^{m_1}) \ldots ), \ \ k \geq 2,$$
of arbitrary non-trivial free abelian groups of finite ranks is undecidable, i.e., there is no algorithm  that given a finite system of group equations with coefficients in $G$ decides whether or not the system has a solution in $G$. Our proof  is based on the ideas from  \cite{MR} and \cite{KMO}.

In  \cite{KLM} we showed that the Diophantine problem for quadratic equations in metabelian  Baumslag-Solitar groups $BS(1,k)$ and in wreath products $A \wr \mathbb{Z}$, where $A$ is a finitely generated abelian group and   $\mathbb{Z}$ is an infinite cyclic group, is decidable, i.e. there is an algorithm that given a finite  quadratic system of equations with constants in such a group decides whether or not the system has a solution in the group. We showed also that one can decide if there are  non-trivial solutions of systems of equations without coefficients in these groups.  In the published version of this paper we stated that the Diophantine problem in metabelain non-abelian groups $BS(1,k)$ is decidable. Unfortunately, the proof of this is incorrect, so the decidability of the Diophantine problem in such groups is still open.

\section{Wreath products of  groups}

In this section we describe some algebraic properties of wreath products that we use in the sequel.

\subsection{General properties}
\label{se:general}

Let $A$ and $B$ be arbitrary groups. 
Denote by $G=B\wr A$ the restricted wreath product of $A$ and $B$. We fix this notation throughout the whole paper.

By construction, the group $G$ is a semidirect product $N \rtimes A$, where $N$ is the direct product  $N \simeq \Pi_{a\in A} B_a$ of isomorphic copies $B_a$ of $B$, termed the base group of this wreath product, and $A$, termed the active group, acts on $N$ by shifting the copies $B_a$ in $N$. Namely if $f \in \Pi_{a\in A} B_a$ is viewed as a function $f: A \to \cup_{a\in  A} B_a$,  then  the action of $a \in A$ on $f$ results  in a function $f^a(x) = f(ax)$ for any $x \in A$. As usual, elements $g \in G$ can be written uniquely  in a normal form $g = af$, where $a \in A, f \in N$, and the multiplication is given by
\begin{equation} \label{eq:mult-G}
    a_1f_1 \cdot a_2f_2 = a_1a_2f_1^{a_2}f_2.
\end{equation}

Note that in this case $f^a = a^{-1}fa$ - the conjugation of $f$ by $a$.
 The groups $A$ and $B$ canonically embed into $G$ via the maps $a \in A \to a\cdot 1_N \in G$ and $b \in B \to f_b$, where $f_b(x) = 1$ for $x \neq 1$ and $f_b(1) = b$. We often identify $A$ and $B$ with their images in $G$, so $B$ is identified with the copy $B_1$.

We list below some known properties of wreath products $G$.

\begin{enumerate}
    \item [1)] The subgroup $N$ of $G$ is precisely the normal closure $N=ncl (B)$  of $B$ in $G$.
   \item [2)] If $B$ is abelian then $N$ is also abelian. Furthermore, in this case the action of $A$ on $N$ extends linearly to the action of the integer group ring $\Z A$ on $N$, so for $u = \Sigma n_ia_i \in \Z A$ and $t \in N$ $t^u = \Pi (t^{n_i})^{a_i}$. This makes $N$ into $\Z A$-module. Observe, that we use here the multiplicative notation for the action of $\Z A$ on $N$, which matches naturally with the conjugation notation in $G$.
   \item [3)] If $B$ is abelian then for any $1 \neq b \in B$ one has $C_G(b) = N$.
    \end{enumerate}
\begin{proof}
    1) and 2) are obvious. 3) Let $1\neq b\in B$. Since $N$ is a direct product $N \simeq \Pi_{a\in A} B_a$ of isomorphic copies $B_a$ of $B$, every element in $N$ commutes with $b$. Every element not in $N$ has form $af$, $1\neq a\in A, f\in N$. Since $fb=bf$ and $ab\neq ba$, $af$ does not commute with $b$.
\end{proof}

\subsection{Wreath products of free abelian groups }
\label{se:free-abelian}

Let $A$ and  $B$ be free abelian groups with bases $\{a_1,\ldots ,a_m\}$ and $\{b_1, \ldots,b_n\}$, respectively. If not said otherwise, we vew the groups $A$ and $B$ in the multiplicative notation.
Let $G = B \wr A$. 

We list below some known properties of $G$.

\begin{enumerate}
    
    \item [3)] The ring $\Z A$ is isomorphic to the ring of Laurent polynomials 
    $$
    \Z A \simeq \Z[a_1, a_1^{-1}, \ldots, a_m,a_m^{-1}],
    $$
    so sometimes we identify $\Z A$ with $Z[a_1, a_1^{-1}, \ldots, a_m,a_m^{-1}]$.
    \item [4)] $Z A$-module $N$ is a free $\Z A$-module with basis $b_1, \ldots, b_n$.
    \end{enumerate}

Now we describe the members $G_n$ of the lower central series of $G$. Recall that $G_1 = G$ and $G_{i+1} = [G,G_i]$ for $1 < i \in \N$.

Denote by $\Delta$  the augmentation  ideal of $\Z A$, that is the kernel of the homomorphism $\Z A \to \Z$ induced by the trivial homomorphism $A \to 1$. As an ideal $\Delta$ is generated by elements $a_1 - 1, \ldots, a_m -1$ of $\Z A$. To see this note  that $a_i^{-1}-1 = \frac{1-a_i}{a_i}$ and $uv-1 = u(v-1)+(u-1)$ for $u,v \in A$. Let  $\Delta^i$ be the $i$th-power of $\Delta$, i.e., it is an ideal in $\Z A$ generated by all products of the type $(y_1-1) \ldots (y_i-1)$, where $y_j \in \{a_1, \ldots, a_m\}$. Denote by $N^{\Delta^{i}}$ the $\Z A$-submodule of $N$ generated by all elements $u^P$, where $u \in N$ and $P \in \Delta^i$. Put $R = \Z A$, $R_i = \Z A/\Delta^i$, and $N_i = N/ N^{\Delta^i}$. In particular, $R_0 \simeq  R$, $R_1  \simeq \Z$, 

\begin{lemma} \label{le:1}
The following hold in $G$:
\begin{itemize}
    \item [1)] $G_i$ is generated as a normal subgroup in $G$ by all simple left-normed commutators of the type
    $[b_k,a_{j_1},\ldots ,a_{j_{i-1}}]$, where $1\leq k \leq n$, $1\leq j_1, \ldots, j_{i-1} \leq m$;
    
    \item [2)]   $G_i = N^{\Delta^{i-1}}$ for every natural $i > 1$, in particular, $G_2 = [G,G] = N^\Delta$;
   \item [3)] $N_i = N/ N^{\Delta^i} \simeq B \otimes_\Z R_i $  for every natural $i \geq 1$;
    \item [4)] $G/G_i \simeq N_{i-1} \rtimes  A \simeq (B \otimes_\Z R_{i-1}) \rtimes  A$ for every natural $i \geq 1$;
    \item [5)] $G_i/G_{i+1}$ is free abelian of finite rank for every natural $i\geq 1$;
    \item [6)] $G/G_{i+1}$ is torsion-free for every natural $i > 1$.
\end{itemize}

\end{lemma}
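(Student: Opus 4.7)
The strategy is to prove the six parts in order, translating commutator identities in $G$ into module-theoretic statements on $N$ via the basic computation $[b,a] = b^{-1}b^a = b^{a-1}$, written multiplicatively in the $\Z A$-module $N$ whenever $b\in N$ and $a\in A$. For (1), I would induct on $i$: the standard identities $[xy,z] = [x,z]^y[y,z]$ and $[x,yz] = [x,z][x,y]^z$, combined with the fact that $A$ is abelian, $N$ is abelian (property 2 of Section 2.1), and $G$ is generated by $\{a_j\}\cup\{b_k\}$, force every nontrivial left-normed weight-$i$ commutator in the generators to have the form $[b_k, a_{j_1},\ldots,a_{j_{i-1}}]$ (or an inverse/conjugate thereof): once two entries from $N$ meet in a commutator, the result is $1$, so after a leading $b_k$ all remaining entries must lie in $A$. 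An arbitrary weight-$i$ commutator reduces modulo $G_{i+1}$ to a product of such left-normed commutators; taking normal closure yields (1).

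For (2), iterating $[b,a] = b^{a-1}$ gives
\[
[b_k, a_{j_1}, \ldots, a_{j_{i-1}}] = b_k^{(a_{j_1}-1)\cdots(a_{j_{i-1}}-1)} \in N,
\]
and since $\Delta^{i-1}$ is generated as an ideal by such products, the $\Z A$-submodule of $N$ generated by these elements equals $N^{\Delta^{i-1}}$; combining with (1) gives $G_i = N^{\Delta^{i-1}}$. Part (3) uses property (4) of Section 2.2: $N\simeq B\otimes_\Z R$ as $\Z A$-modules (free module on $b_1,\ldots,b_n$), so $N^{\Delta^i} \simeq B\otimes_\Z \Delta^i$ and $N_i \simeq B\otimes_\Z R_i$. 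Part (4) is then immediate from the semidirect decomposition $G = N\rtimes A$ together with the inclusion $G_i \subseteq N$ (normal in $G$).

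For (5), combining (2) and (3) yields $G_i/G_{i+1} \simeq B \otimes_\Z (\Delta^{i-1}/\Delta^i)$, and the main ingredient is the classical identification of the $\Delta$-adic associated graded of $\Z A = \Z[\Z^m]$ with the polynomial ring $\Z[x_1,\ldots,x_m]$, where $x_j = \overline{a_j-1}$; this makes $\Delta^{i-1}/\Delta^i$ free abelian of finite rank $\binom{m+i-2}{i-1}$, and tensoring with the finitely generated free abelian $B$ preserves this. For (6), I would invoke the standard nilpotent-group fact that a nilpotent group whose lower central quotients are all torsion-free is itself torsion-free: $G/G_{i+1}$ is nilpotent of class $\leq i$, and its lower central quotients coincide with $G_j/G_{j+1}$ for $1\leq j\leq i$, each free abelian by (5), so torsion-freeness follows. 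The only step outside of routine commutator calculus is the associated-graded identification used in (5), which I would cite from a standard reference (e.g.\ Passman) rather than reprove.
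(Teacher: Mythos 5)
Your proposal is correct and follows essentially the same route as the paper: commutator calculus reducing to the left-normed commutators $[b_k,a_{j_1},\ldots,a_{j_{i-1}}]=b_k^{(a_{j_1}-1)\cdots(a_{j_{i-1}}-1)}$ for (1)--(2), the identification $N\simeq B\otimes_\Z \Z A$ for (3)--(4), the structure of $\Delta^{i-1}/\Delta^{i}$ for (5), and closure of torsion-freeness under extensions for (6). The only (cosmetic) difference is in (5), where you cite the classical identification of the $\Delta$-adic associated graded of $\Z[\Z^m]$ with $\Z[x_1,\ldots,x_m]$, while the paper proves the needed linear independence directly via the substitution automorphism $a_j\mapsto a_j-1$ of $\Z[a_1,\ldots,a_m]$.
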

\begin{proof}
To prove 1) recall that $G_i$ is generated as a normal subgroup by all simple left-normed commutators of the form $[x_1, \ldots,x_i]$ where $x_k \in \{a_1, \ldots,a_m,b_1, \ldots,b_n\}$, $k = 1,\ldots,i$ (this is true for any group $G$ and $x_i$ from any fixed generating set of $G$). Since $N$ is abelian and $G' \leq N$ any commutator $[x_1, \ldots,x_i]$  where $x_s \in N$ for some $s>2$ is equal to 1. Therefore, $G_i$ is generated as a normal subgroup in $G$ by the commutators  $[b_k,a_{j_1},\ldots ,a_{j_{i-1}}]$, as claimed. 

To prove 2), note that 
$$
[b_k,a_{j_1},\ldots ,a_{j_{i-1}}] = b_k^{(a_{j_1}-1)\ldots (a_{j_{i-1}}-1)},
$$
which implies 2). 

To see 3) observe that $N \simeq B \otimes_Z \Z A$ and $B \otimes_\Z R_i $ is an $R_i$-module generated by $B$. The natural epimorphism $\Z A \to R_i = \Z A/\Delta^i$ turns  $B \otimes_\Z R_i $ into a $\Z A$-module. Now, the identical homomorphism $B \to B$ and the epimorphism $\Z A \to R_i = \Z A/\Delta^i$ give rise to an epimorphism 
$$
\theta: B \otimes_Z \Z A \to B \otimes_\Z R_i.
$$
To prove 3) it suffices to note  that $\ker \theta = N^{\Delta^i}$. 

 Clearly, 2) and 3) imply 4). 
 
 For 5) note that $G_i/G_{i+1}$ is freely generated as an abelian group by elements $[b_k,a_{j_1},\ldots ,a_{j_{i-1}}],$ where $j_1\leq\ldots \leq j_{i-1}, k=1,\ldots ,n.$ Indeed, it follows from 1) and 2) that $G_i/G_{i+1}$ is isomorphic as an abelian group to $N^{\Delta^{i-1}}/N^{\Delta^{i}}$. It also follows from 2) that the images of the elements $b_k^{(a_{j_1}-1)\ldots (a_{j_{i-1}}-1)}$ in the quotient $N^{\Delta^{i-1}}/N^{\Delta^{i}}$ generate this quotient. Since the ring $\Z A$ is commutative we can always reorder factors in ${(a_{j_1}-1)\ldots (a_{j_{i-1}}-1)}$ and assume that $j_1\leq\ldots \leq j_{i-1}$. This set still generates the quotient as an abelian group. Moreover, there are no non-trivial relations between these elements. To see this, it suffices to notice that every polynomial $P \in \Z[a_1, \ldots,a_m]$ can be uniquely decomposed as a polynomial with integer coefficients in  "new variables" $y_i = a_i -1$. Indeed, the map $a_1 \to a_1-1, \ldots, a_m \to a_m -1$ extends to an endomorphism  $\psi$ of  $\Z[a_1, \ldots,a_m]$ (since $\Z[a_1, \ldots,a_m]$ is a free commutative ring with basis $a_1, \ldots,a_m$). In fact, $\psi$  is a bijection because there is an inverse map induced by the map $a_1 \to a_1+1, \ldots, a_m \to a_m +1$. It follows that $P = \psi(\psi^{-1}(P))$ is the required decomposition. It is unique since $\psi$ is an automorphism. Now it is clear that the elements $b_k^{(a_{j_1}-1)\ldots (a_{j_{i-1}}-1)},$ where $j_1\leq\ldots \leq j_{i-1}, k=1,\ldots ,n$  form  a basis of $N^{\Delta^{i-1}}$ modulo $N^{\Delta^{i}}$. This proves 5).

 6) follows from 5) since extensions of torsion-free groups are again torsion-free. 

\end{proof}

\begin{lemma} \label{le:2}
The following holds in $G$:

\begin{itemize}
    \item [1)] $N$ is the centralizer $C_G(g)$ of any non-trivial $g \in N$;
    \item [2)] $N$ is the centralizer $C_G(G')$ of the commutant $G' = [G,G]$;
    \item [3)] $A$ is the centrailizer $C_G(a)$ of any non-trivial $a \in A$.
   
\end{itemize}

 \end{lemma}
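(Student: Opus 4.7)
The plan is to reduce all three parts to a single observation: in the free abelian (hence torsion-free) group $A$, no non-identity element leaves a finite non-empty subset $S\subseteq A$ invariant under left translation. Indeed, if $aS=S$ then $\langle a\rangle\cdot s\subseteq S$ for each $s\in S$, so $a$ has finite order, forcing $a=1$. Combined with property 2) of Section \ref{se:general} (that $N$ is abelian since $B$ is), this does essentially all the work.

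For 1), the inclusion $N\subseteq C_G(g)$ is immediate from $N$ abelian. For the reverse, I write any element of $G$ as $af$ with $a\in A$, $f\in N$; since $f$ already commutes with $g\in N$, the condition $(af)g=g(af)$ reduces via (\ref{eq:mult-G}) to $g^{a}=g$, i.e.\ $g(ax)=g(x)$ for every $x\in A$. The finite non-empty support $S=\{x:g(x)\neq 1\}$ thus satisfies $aS=S$, so the observation above gives $a=1$ and $af\in N$.

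For 2), Lemma \ref{le:1}(2) gives $G'=N^{\Delta}\subseteq N$, so $N\subseteq C_G(G')$ by abelianness of $N$. For the reverse inclusion it suffices to exhibit one non-trivial element of $G'$ and apply 1). The commutator $[b_1,a_1]=b_1^{a_1-1}$ works: by property 4) of Section \ref{se:free-abelian}, $N$ is a free $\Z A$-module on $b_1,\dots,b_n$ and $a_1-1\neq 0$ in $\Z A$, so $b_1^{a_1-1}\neq 1$; hence $C_G(G')\subseteq C_G([b_1,a_1])=N$. For 3), $A\subseteq C_G(a)$ follows from $A$ abelian; conversely, for $cf\in C_G(a)$ with $c\in A$, $f\in N$, the element $c$ automatically commutes with $a$, so (\ref{eq:mult-G}) reduces $(cf)a=a(cf)$ to $f^{a}=f$. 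With $a\neq 1$ fixed, the same translation-invariance argument applied to the support of $f$ forces $f=1$, whence $cf\in A$.

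The only substantive ingredient is the translation-invariance observation above; everything else is unwinding the multiplication rule (\ref{eq:mult-G}) in the multiplicative notation. The main (mild) obstacle is doing this bookkeeping cleanly — in particular, noticing that in parts 1) and 3) the "easy" factor ($f$ in 1), $c$ in 3)) trivially commutes with the element in question, so that the centralizer condition isolates on a single variable to which the finite-support/torsion-free observation can be applied.
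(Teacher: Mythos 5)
Your proof is correct and follows essentially the same route as the paper: decompose an element as $af$ with $a\in A$, $f\in N$, use abelianness of $N$ (resp.\ $A$) to isolate the condition on the other factor, and reduce 2) to 1) via a non-trivial element of $G'\subseteq N$. In fact you supply a detail the paper leaves implicit — the finite-support/torsion-freeness argument showing that no non-trivial $a\in A$ can satisfy $g^a=g$ for non-trivial $g\in N$ — which is a worthwhile addition.
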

\begin{proof} For any non-trivial $g\in N$, every element in $N$ commutes with $g$ and every element in $A$ does not commute with $g$. Since elery element in $G$ is a product of an element in $N$ and an element in $A$, this gives 1) and 2). Since $A$ is abelian, for any $a\in A$,
$A\subseteq C_G(a)$. And $a$ does not commute with any non-trivial $g\in N$, this gives 3). 
    
\end{proof}

\section{Diophantine problems and interpretability by equations}\leavevmode

In this section we describe the main technique, \emph{interpretability by equations},  in reducing one  Diophantine problem to another. Then we prove two results on undecidability of the Diophantine problems in wreath products.

\subsection{Diophantine sets and interpretability by equations}

The notion of e-interpretability was introduced in \cite{GMO}. Here we remind
this notion and state some basic facts we use in the sequel.

Note, that we always assume that equations
may contain constants from the algebraic structure in which they are considered.

\begin{definition}
 A subset $D \subset \mathcal M^m$ is called \emph{Diophantine}, or \emph{definable by
equations}, or \emph{e-definable} in~$\mathcal M$, if there exists a finite system
of equations, say $\Sigma(x_1,\dots, x_m, y_1,\dots,y_k)$, in the language of~$\mathcal M$ such that 
$$
D = \{(a_1, \ldots,a_m) \in \mathcal M^m \mid \mathcal M \models \exists y_1 \ldots \exists y_k \Sigma(a_1, \ldots,a_m,y_1,\dots,y_k) \}.
$$
\end{definition}

Formulas of the type $\exists \bar y \Sigma(\bar x, \bar y)$, where $\Sigma$ is a conjunction of equations,  are called \emph{Diophantine}, or \emph{positive primitive}, or \emph{pp-formulas}. 

By $\mathcal A = \langle A;\mathbb L \rangle$ we denote an algebraic structure in a language $\mathbb L$. The language $\mathbb L$ consists of symbols of operation (or functions) $f, \ldots$, predicates $r, \ldots$, and constants $c, \ldots$. These symbols are interpreted in $\mathcal A$ by operations $f^A$, predicates $r^A$, and constants $c^A$ on the set $A$.

\begin{definition}
 An algebraic structure $\mathcal A = \langle A;\mathbb L \rangle$   is called \emph{e-interpretable} in an algebraic structure $\mathcal M$ if there exists a subset $D\subseteq \mathcal M^n$ and a surjective map (called the \emph{coordinate map}) $\varphi: D\to \mathcal A$, such that:
\begin{enumerate}
\item [1)] $D$ is e-definable in~$\mathcal M$;
\item [2)] The $\varphi$-preimage of the equality relation $=$ on  $\mathcal A$ is e-definable in $\mathcal M$;
\item [3)] For every functional symbol $f$ in $\mathbb L$ the $\varphi$-preimage  of the graph of the operation $f^A$   on $A$ is e-definable in $\mathcal M$;
\item [4)] For every relational symbol $r$ in $\mathbb L$, the $\varphi$-preimage of the graph of the predicate $r^A$ on $A$ is e-definable in~$\mathcal M$;
\item [5)] For every constant symbol $c$ in $\mathbb L$ the  preimage under $\phi$ of the element $c^A$ is e-definable in $\mathcal M$.
\end{enumerate}
\end{definition}

 E-interpretability is a variation of the classical notion of the first-order interpretability, where instead of arbitrary first-order formulas the Diophantine formulas are used as the interpreting formulas.

In this paper we use the following, very particular type of e-interpretability.

\begin{lemma}\label{le:normal-sub}
    Let $G$ be a group and $N$ a normal subgroup of $G$.
    If $N$ is definable in $G$ by equations, then $H = G/N$ is interpretable in $G$ by equations.
\end{lemma}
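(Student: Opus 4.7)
The plan is to use the canonical projection $\pi\colon G \to G/N$, $g \mapsto gN$, as the coordinate map, with domain $D = G$ itself. Since $D = G$ is trivially e-definable in $G$ by the equation $x = x$, clause (1) of the definition of e-interpretability is immediate, and $\pi$ is plainly a surjection onto $H = G/N$.

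The remaining clauses all reduce to the same observation: given a pp-formula $\Phi_N(x)$ witnessing e-definability of $N$, each condition we need to capture is a coset-membership statement of the form ``some fixed group word in the $x_i$ lies in $N$'', which is itself defined by the pp-formula obtained from $\Phi_N$ by substituting the appropriate term for its free variable. Concretely, $\pi(g_1) = \pi(g_2)$ iff $g_1^{-1} g_2 \in N$, giving clause (2); the graph of multiplication in $H$ pulls back to $g_3^{-1} g_1 g_2 \in N$, and the graph of inversion to $g_1 g_2 \in N$, giving clause (3); and the preimage of the identity of $H$ is exactly $N$, defined by $\Phi_N(x)$, giving clause (5). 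Since the language of groups has no relation symbols beyond $=$, clause (4) is covered vacuously.

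There is no serious obstacle: the argument is a direct verification resting on two facts, namely that pp-definability is preserved under substitution of group terms, and that normality of $N$ is exactly what makes each of the coset-membership conditions above well-posed (i.e.\ independent of the choice of coset representatives). I would write the proof as a short one-paragraph check listing the five clauses of the definition in turn and exhibiting, for each, the appropriate substitution instance of $\Phi_N$.
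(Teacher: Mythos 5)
Your proposal is correct and follows essentially the same route as the paper: take $D=G$ with the canonical projection as coordinate map, and express each required preimage (equality, multiplication, inversion, identity) as a coset-membership condition handled by the pp-formula defining $N$. The only cosmetic difference is that the paper introduces an auxiliary existential variable $t$ with $x=yt$ rather than substituting a group term directly into the defining formula; both are equivalent.
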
 
\begin{proof}
    To interpret $H = G/N$ in $G$ by equations take $D = G$ and put $\phi: D \to H$ defined by $\phi: x \to xN$.
    Let $\wedge_i S_i(z,\bar y) = 1$ be a finite system of equations (with constants from $G$) such that the Diophantine formula $\exists \bar y (\wedge_i S_i(z,\bar y) = 1$ defines $N$ in $G$. Denote by $graph_=$ the graph of the equality $=$ in $G/N$. Then 
    $$
    \phi^{-1}(graph_=) = \{(x,y) \in G^2 \mid \exists t \in N (x= yt)\}.
    $$
    Hence $\phi^{-1}(graph_=) $ is defined in $G$ by the following Diophantine formula
    $$
    \psi_=(x,y) = \exists t \exists \bar y (S_i(t,\bar y) = 1 \wedge x=yt).
    $$
    Similarly, the $\phi$-preimage of the multiplication in $G/N$ is defined in $G$ by the formula
    $$
    \psi_{mult} (x,y,z) = \exists t \exists \bar y (S_i(t,\bar y) = 1 \wedge xy=zt).
    $$
    The $\phi$-preimage of the identity $1_H$ in $H$ is precisely $N$, which is Diophantine in $G$. The $\phi$-preimage of inversion in $G/N$ is also Diophantine in $G$. This finishes the proof.
\end{proof}

\subsection{Interpretability by equations and the Diophantine problems}

The following is a fundamental property of e-interpretability. Intuitively it states that if $\mathcal A$ is e-interpretable in $\mathcal M$  then any system of equations in $\mathcal A$ can be effectively ``encoded'' by an equivalent system of equations in $\mathcal M$. For simplicity we assume that the language $\mathbb L_{\mathcal A}$ of $\mathcal A$ and the language $\mathbb L_{\mathcal M}$ of $\mathcal M$ are finite, and the structures $\mathcal A$ and $\mathcal M$ are finitely generated. Otherwise, one has to consider computable languages $\mathbb L_{\mathcal A}$  and $\mathbb L_{\mathcal M}$ and has to be careful when choosing constants for equations in $\mathcal A$ and $\mathcal M$. We refer to \cite{GMO} for the general case.

\begin{lemma}[\cite{GMO}]\label{lemmaM3.7}
 Let $\mathcal A$ be e-interpretable in $\mathcal M$ with the coordinate map  $\varphi:D\to \mathcal A$.  Then there is a polynomial time algorithm that for every finite system of equations $S(\mathbf x)$ in the language $\mathbb L_{\mathcal A}$ with coefficients in $\mathcal A$  constructs a finite system of equations $S^*(\mathbf y,\mathbf z)$ in the language $\mathbb L_{\mathcal M}$  with coefficients in $\mathcal M$, such that $S(\mathbf x)$ has a solution in $\mathcal A$ if and only if $S^*(\mathbf y,\mathbf z)$ has a solution in $\mathcal M$. More precisely, if $(\mathbf b,\mathbf c)$ is a solution to $S^*(\mathbf y;\mathbf z)$ in~$\mathcal M$, then $\mathbf b\in D$ and $\varphi(\mathbf b)$ is a solution to $S(\mathbf x)$ in~$\mathcal A$. Moreover, any solution $\mathbf a$ to $S(\mathbf x)$ in~$\mathcal A$ arises in this way, i.\,e. $\mathbf a=\varphi(\mathbf b)$ for some solution $(\mathbf b,\mathbf c)$ to $S^*(\mathbf y,\mathbf z)$ in~$\mathcal M$.
\end{lemma}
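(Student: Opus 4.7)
The plan is to build $S^*$ by translating the equations of $S$ symbol by symbol through the e-interpretation data. Let $n$ be the arity such that $D \subseteq \mathcal M^n$, and fix the pp-formulas given by the e-interpretation: $\varphi_D(\mathbf u)$ defining $D$, $\varphi_=(\mathbf u,\mathbf v)$ defining the $\varphi$-preimage of equality, $\varphi_f(\mathbf u_1,\dots,\mathbf u_k,\mathbf v)$ defining the preimage of the graph of each operation symbol $f\in \mathbb L_{\mathcal A}$ of arity $k$, and $\varphi_c(\mathbf v)$ defining the preimage of each constant symbol $c\in \mathbb L_{\mathcal A}$. Each is a finite conjunction of equations in $\mathbb L_{\mathcal M}$ preceded by existential quantifiers.

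First I would assign to every variable $x_i$ occurring in $S$ a fresh tuple $\mathbf y_i$ of $n$ variables of $\mathcal M$ and conjunct the formula $\varphi_D(\mathbf y_i)$ into the developing system. For each coefficient $a\in \mathcal A$ that appears in $S$, I would introduce a fresh tuple $\mathbf z_a$ constrained by a pp-formula asserting $\varphi(\mathbf z_a)=a$; since $\mathcal A$ is finitely generated and $\mathbb L_{\mathcal A}$ is finite, such a pp-formula is obtained by composing finitely many of the formulas $\varphi_c$ and $\varphi_f$ for a fixed term expressing $a$ from the interpreted constants. Next, traversing each equation $t(\mathbf x) = s(\mathbf x)$ of $S$ from the innermost subterms outward, for every compound subterm $t = f(t_1,\dots,t_k)$ introduce a fresh tuple $\mathbf z_t$ and add the conjunct $\varphi_f(\mathbf z_{t_1},\dots,\mathbf z_{t_k},\mathbf z_t)$; finally, replace the outer equality $t=s$ by $\varphi_=(\mathbf z_t,\mathbf z_s)$. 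Taking the conjunction of all these formulas over all equations of $S$ and pulling existential quantifiers to the front yields a pp-formula $\exists \mathbf z\, S^*(\mathbf y,\mathbf z)$ in which $S^*$ is a finite conjunction of equations in $\mathbb L_{\mathcal M}$.

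The equivalence is then immediate from the definition of e-interpretation. Given a solution $\mathbf a$ of $S$ in $\mathcal A$, surjectivity of $\varphi$ yields tuples $\mathbf b \in D$ with $\varphi(\mathbf b)=\mathbf a$, and the pp-formulas defining $D$, the constants, the operations, and equality then supply compatible witnesses $\mathbf c$ for the auxiliary variables $\mathbf z$. Conversely, any solution $(\mathbf b,\mathbf c)$ of $S^*$ forces $\mathbf b \in D$ via the $\varphi_D$ conjuncts and, by the correctness of the remaining pp-formulas, pushes down under $\varphi$ to a genuine solution of $S$ in $\mathcal A$. This gives the last two assertions of the lemma as a bookkeeping consequence of the construction.

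For the polynomial-time bound, observe that once the (fixed) e-interpretation data have been set up, each of $\varphi_D,\varphi_=,\varphi_f,\varphi_c$ has constant size because $\mathbb L_{\mathcal A}$ is finite, so the translation replaces each symbol occurrence in $S$ by a bounded-size chunk of $\mathbb L_{\mathcal M}$-equations; the overall size of $S^*$ is then $O(|S|)$ and the procedure runs in polynomial (in fact linear) time. The main delicate point I expect is the uniform treatment of coefficients from $\mathcal A$: without the finite-generation and finite-language hypotheses one must replace the ad hoc fixed term expressions for constants by a computable presentation and uniformly constructible pp-names, which is precisely the technical issue handled in the general case of \cite{GMO}.
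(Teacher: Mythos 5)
The paper offers no proof of this lemma at all: it is quoted from \cite{GMO} and used as a black box, so there is no in-paper argument to compare yours against. Your proposal is the standard term-flattening translation and is essentially correct: replace each $\mathcal A$-variable by an $n$-tuple constrained by $\varphi_D$, flatten terms by introducing fresh tuples tied together with the pp-formulas for the graphs of the operations, replace equality by $\varphi_=$, and pull the existential quantifiers to the front; correctness in both directions then follows from surjectivity of $\varphi$ and the defining property of the preimage formulas, and the linear size bound gives the polynomial-time claim. This is exactly the proof given in \cite{GMO}.

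One small point worth tightening is your treatment of coefficients. You propose to pin down a coefficient $a\in\mathcal A$ by a pp-formula built from the formulas $\varphi_c$ for the constant symbols of $\mathbb L_{\mathcal A}$; but the constants of the language need not generate $\mathcal A$, so such a term for $a$ may not exist. The cleaner (and standard) route, consistent with the paper's convention that equations in $\mathcal M$ may carry coefficients from $\mathcal M$, is simply to fix once and for all a preimage tuple $\mathbf b_a\in D$ with $\varphi(\mathbf b_a)=a$ for each generator of $\mathcal A$ (and hence, via the term-flattening machinery, for any coefficient given as a word in the generators), and substitute that tuple directly as constants of $\mathcal M$ into $S^*$. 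Effectively choosing such preimages is precisely the delicacy the paper alludes to when it restricts to finitely generated structures and finite languages; you correctly flag this at the end, so the gap is one of formulation rather than substance.
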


\begin{cor}\label{cor:1}
 Let $\mathcal A$ be e-interpretable in~$\mathcal M$. Then the Diophantine problem in $\mathcal A$ is reducible in polynomial time (Karp reducible) to the Diophantine problem in $\mathcal M$. Consequently, if $DP(\mathcal A)$ is undecidable, then $DP(\mathcal M)$ is undecidable as well.
\end{cor}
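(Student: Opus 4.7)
The plan is to derive this corollary as an immediate consequence of Lemma \ref{lemmaM3.7}, which has already done all of the substantive work. The idea is that e-interpretability gives a uniform way to rewrite equations over $\mathcal{A}$ as equations over $\mathcal{M}$, and a Karp reduction is exactly what such a rewriting procedure provides.

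First, I would fix the reduction. Given an instance $S(\mathbf{x})$ of the Diophantine problem over $\mathcal{A}$, apply the polynomial-time algorithm furnished by Lemma \ref{lemmaM3.7} to produce the system $S^{*}(\mathbf{y},\mathbf{z})$ of equations in the language $\mathbb{L}_{\mathcal{M}}$ with constants in $\mathcal{M}$. This map $S \mapsto S^{*}$ is the desired reduction; polynomial-time computability is stated verbatim in the lemma.

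Next, I would verify correctness of the reduction. The lemma provides both directions of the equivalence $S$ is solvable in $\mathcal{A}$ iff $S^{*}$ is solvable in $\mathcal{M}$: forward, any solution $\mathbf{a}$ to $S$ in $\mathcal{A}$ arises as $\varphi(\mathbf{b})$ for some solution $(\mathbf{b},\mathbf{c})$ of $S^{*}$ in $\mathcal{M}$; backward, any solution $(\mathbf{b},\mathbf{c})$ to $S^{*}$ in $\mathcal{M}$ satisfies $\mathbf{b}\in D$ and $\varphi(\mathbf{b})$ solves $S$ in $\mathcal{A}$. Thus $S \mapsto S^{*}$ is a polynomial-time many-one reduction from $DP(\mathcal{A})$ to $DP(\mathcal{M})$, which is the Karp reduction claimed.

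Finally, the undecidability transfer is the standard contrapositive: if $DP(\mathcal{M})$ were decidable, composing the reduction with the decision procedure for $DP(\mathcal{M})$ would yield a decision procedure for $DP(\mathcal{A})$, contradicting the assumption. There is no real obstacle here, since the lemma already encapsulates the only nontrivial content, namely the uniform translation of the interpreting Diophantine formulas for the domain $D$, equality, operations, relations, and constants into the single system $S^{*}$. The only care needed in writing up is to note explicitly that the finiteness of the languages and the finite generation of $\mathcal{A}$ and $\mathcal{M}$ (already assumed for Lemma \ref{lemmaM3.7}) guarantee that constants appearing in $S$ can be effectively replaced by Diophantine preimages over $\mathcal{M}$, so the output $S^{*}$ is genuinely a system of equations in $\mathcal{M}$.
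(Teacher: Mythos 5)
Your proposal is correct and is exactly the argument the paper intends: the corollary is stated as an immediate consequence of Lemma \ref{lemmaM3.7}, whose polynomial-time translation $S \mapsto S^{*}$ together with the stated two-way correctness is precisely the Karp reduction, and the undecidability transfer is the standard contrapositive. No issues.
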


\begin{theorem} \label{th:3.1}
    Let $H$ and $K$ be groups and the Diophantine problem in $H$ is undecidable. Then the following holds:
    \begin{enumerate}
        \item [1)] If $K$ is abelian  then the Diophantine problem in $G = K \wr H$ is undecidable. 
        \item [2)] If $K$ is finite and $H$ is torsion-free then the Diophantine problem in $G = K \wr H$ is undecidable. 
    \end{enumerate}
\end{theorem}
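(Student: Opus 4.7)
The plan is to reduce both statements to the fact, already captured by Lemma~\ref{le:normal-sub} together with Corollary~\ref{cor:1}, that if the base group $N = \mathrm{ncl}_G(K)$ of the wreath product $G = K \wr H$ is definable in $G$ by equations, then $H \simeq G/N$ is e-interpretable in $G$, and therefore the undecidability of the Diophantine problem in $H$ transfers to $G$. So in both parts, the whole task reduces to exhibiting a finite system of equations with coefficients in $G$ whose solution set is exactly $N$.

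For part 1), assume $K$ is abelian. If $K$ is trivial there is nothing to prove, so pick a fixed element $1 \neq k_0 \in K$. Property 3) from Section~\ref{se:general} tells us that $C_G(k_0) = N$. Hence $N$ is defined in $G$ by the single equation
\[
x\, k_0 = k_0\, x,
\]
with the constant $k_0$ coming from $G$. This is a Diophantine definition of $N$, and Lemma~\ref{le:normal-sub} and Corollary~\ref{cor:1} then finish the argument.

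For part 2), assume $K$ is finite of exponent $e$ and $H$ is torsion-free. The base group $N$ is a (restricted) direct product of copies of $K$, so every $f \in N$ satisfies $f^e = 1$. Conversely, a general element of $G$ can be written uniquely as $g = a f$ with $a \in H$ and $f \in N$. If $g \notin N$ then $a \neq 1$, and a straightforward computation using the multiplication rule (\ref{eq:mult-G}) shows that the projection of $g^n$ to $H$ equals $a^n$; since $H$ is torsion-free, $a^n \neq 1$ for all $n \geq 1$, and in particular $g^e \neq 1$. Therefore
\[
N = \{\, g \in G \mid g^e = 1 \,\},
\]
which is a Diophantine set in $G$, defined by the single equation $x^e = 1$. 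Again Lemma~\ref{le:normal-sub} gives an e-interpretation of $H = G/N$ in $G$, and Corollary~\ref{cor:1} yields the undecidability of the Diophantine problem in $G$.

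There is no real obstacle in either part: the content is the choice of the correct Diophantine description of $N$ — the centralizer equation in the abelian case, and the bounded-torsion equation in the finite-and-torsion-free case — after which the general machinery of Lemma~\ref{le:normal-sub} and Corollary~\ref{cor:1} does the rest.
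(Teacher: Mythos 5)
Your proposal is correct and follows essentially the same route as the paper: define $N$ by the centralizer equation in case 1) and by the bounded-torsion equation in case 2), then apply Lemma~\ref{le:normal-sub} and Corollary~\ref{cor:1}. The only (harmless) differences are that you use the exponent of $K$ rather than its order and explicitly dispose of the trivial-$K$ case.
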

\begin{proof} 
In the notation of Section \ref{se:general}  $G \simeq N \rtimes H$, where $N$ is the normal closure of $K$ in $G$. 
    
    Suppose first that $K$ is abelian. Then by the property 3) from Section \ref{se:general}  $N = C_G(b)$, where $b \neq 1$ is an arbitrary element from $K$. Hence $N$ is defined by equations in $G$. By Lemma  \ref{le:normal-sub} the group $H \simeq G/N$ is interpretable by equations in $G$. Since the Diophantine problem in $H$ is undecidable it follows from  Corollary \ref{cor:1} that the Diophantine problem in $G$ is undecidable. This proves 1).

    To prove 2) assume that the group $K$ is finite and $H$ is torsion-free. Let $n = |K|$. Then every element in $K$ satisfies the equation $x^n = 1$. Since $N$ is a direct product of copies of groups $K$ every element in $N$ also satisfies this equation. Because $H$ is torsion-free no element in $G\smallsetminus N$ has torsion. Hence the formula $x^n = 1$ defines $N$ in $G$, hence $N$ is definable by equations in $G$. Now the argument from 1) finishes the proof.
\end{proof}
 
\section{The Diophantine problem in $G = \Z^m \wr \Z^n$}
\label{se:3.1}

In this section we use notation from Section \ref{se:free-abelian}. Thus,  $A$ and  $B$ be free abelian groups with bases $\{a_1,\ldots ,a_m\}$ and $\{b_1, \ldots,b_n\}$, respectively, and $G = B \wr A$. 
 
 \subsection{Some definable by equations subgroups in $G$}

We establish first that some subgroups of $G$ are Diophantine in $G$, i.e., they are definable in $G$ by equations.

 \begin{lemma} \label{le:3}
    \begin{itemize}
        \item [1)] The subgroup $N$ is definable in $G$ by the formula 
$$\phi_N (x)= ([x,b] = 1),$$
where $b$ is an arbitrary element of $B$.
\item [2)] The subgroup $A$ is definable in $G$ with parameters by the formula
$$
\phi_A(x) = ([x,a] =1),
$$
where $a$ is an arbitrary element of $A$.
    \end{itemize}
\end{lemma}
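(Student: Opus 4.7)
The plan is to read both statements as assertions about centralizers and deduce them directly from Lemma~\ref{le:2}. The formula $\phi_N(x) = ([x,b]=1)$ cuts out in $G$ exactly the centralizer $C_G(b)$, and $\phi_A(x) = ([x,a]=1)$ cuts out $C_G(a)$, so the task reduces to identifying these centralizers with $N$ and $A$, respectively.

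For part~1, I would fix a non-trivial $b \in B$; such an element exists since $B$ is free abelian of positive rank $n$. Under the canonical embedding of $B$ into the base group $N$ recalled in Section~\ref{se:general}, the element $b$ becomes a non-trivial element of $N$, and Lemma~\ref{le:2}(1) yields $C_G(b) = N$. One point worth flagging is that we genuinely need $b \neq 1$: otherwise the formula is satisfied vacuously by all of $G$ and cannot pick out $N$. The element $b$ enters $\phi_N$ as a constant (parameter), which is permitted in our notion of Diophantine definition.

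For part~2, pick any non-trivial $a \in A$; Lemma~\ref{le:2}(3) gives $C_G(a) = A$, so $\phi_A$ defines $A$ in $G$. The same non-triviality caveat applies and $a$ again appears as a parameter in the formula.

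Both arguments are essentially one-line translations from the centralizer computations of Lemma~\ref{le:2}, so I do not anticipate any substantive obstacle; the only thing to be careful about is the implicit hypothesis that the parameter used is non-trivial, which is automatic under any reasonable reading of "arbitrary element" since the trivial element yields a tautology.
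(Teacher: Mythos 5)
Your proposal is correct and follows the same route as the paper, which simply cites Lemma \ref{le:2} for both centralizer identifications. Your added caveat that the parameters $b$ and $a$ must be non-trivial is a fair (and worthwhile) clarification of the statement's phrase ``arbitrary element,'' but it does not change the substance of the argument.
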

\begin{proof}
    It follows immediately from Lemma \ref{le:2}.
\end{proof}

\begin{lemma} \label{le:4new}
    For every $i \in \mathbb{N}$ the subgroup $N^{\Delta^i}$ is Diophantine in $G$, i.e., there is a finite system of equations $S_i(x,\bar x_\alpha, \bar y_\alpha, a,b)$  with coefficients in $G$ such that 
    $$
    x \in N^{\Delta^i} \Longleftrightarrow \exists \bar x_\alpha \exists \bar y_\alpha S_i(x,\bar x_\alpha, \bar y_\alpha, a,b).
    $$
\end{lemma}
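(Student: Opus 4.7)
The plan is to write down a single explicit equation that defines $N^{\Delta^i}$ up to finitely many existentially quantified auxiliary variables. The key observation is that, because $N$ is a \emph{free} $\Z A$-module on $b_1,\ldots,b_n$ and the ideal $\Delta^i$ of $\Z A$ is generated as an ideal by the products $(a_{j_1}-1)\cdots (a_{j_i}-1)$ indexed by tuples $\bar j=(j_1,\ldots,j_i)\in\{1,\ldots,m\}^i$, every element of $N^{\Delta^i}$ can be written as a product of at most $m^i$ iterated commutators. More precisely, I would take as the defining formula
\[
S_i(x,\{y_{\bar j}\}_{\bar j},a_1,\ldots,a_m)\ :\ x \;=\; \prod_{\bar j\in\{1,\ldots,m\}^i}[y_{\bar j},a_{j_1},\ldots,a_{j_i}],
\]
where the constants are $a_1,\ldots,a_m$, there is one variable $y_{\bar j}$ per tuple, and the product is taken in any fixed order (the target $N^{\Delta^i}$ is abelian, so the order does not matter).

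First I would prove the direction $\Leftarrow$: for any $y\in G$, the iterated commutator $[y,a_{j_1},\ldots,a_{j_i}]$ has weight $i+1$ and so lies in $G_{i+1}=N^{\Delta^i}$ by Lemma~\ref{le:1}(2); hence every value of the right-hand side of $S_i$ automatically lies in $N^{\Delta^i}$. For the direction $\Rightarrow$, I use that $N^{\Delta^i}=\bigoplus_{k=1}^n b_k\Delta^i$ and expand each coefficient $Q_k\in\Delta^i$ as $\sum_{\bar j}P_{k,\bar j}(a_{j_1}-1)\cdots(a_{j_i}-1)$ with $P_{k,\bar j}\in\Z A$. Regrouping by $\bar j$ and setting $f_{\bar j}:=\sum_k b_kP_{k,\bar j}\in N$ yields the identity
\[
x \;=\; \prod_{\bar j} f_{\bar j}^{(a_{j_1}-1)\cdots(a_{j_i}-1)}
\]
in the module $N$. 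Since $f_{\bar j}\in N$ and the $\Z A$-action on $N$ is induced by conjugation, iterating the elementary identity $g^{a-1}=[g,a]$ (valid for $g\in N$) shows $f_{\bar j}^{(a_{j_1}-1)\cdots(a_{j_i}-1)}=[f_{\bar j},a_{j_1},\ldots,a_{j_i}]$, so the witnesses $y_{\bar j}:=f_{\bar j}$ satisfy $S_i$.

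I do not expect a genuine obstacle; the only delicate point is that the auxiliary variables $y_{\bar j}$ range over all of $G$ rather than only $N$. This is harmless because the commutator with $a_j$ kills the $A$-component of $y$: if $y=a'f$ with $a'\in A$ and $f\in N$, a direct computation gives $[y,a_j]=[f,a_j]$, so any commutator $[y,a_{j_1},\ldots,a_{j_i}]$ is already an iterated commutator of some element of $N$. Thus $S_i$ is a bona fide finite pp-formula defining $N^{\Delta^i}$ in $G$. To match the parameter $b$ in the lemma statement one may additionally conjoin $[x,b]=1$ and each $[y_{\bar j},b]=1$ via Lemma~\ref{le:3}(1), although these are logically redundant since $S_i$ already forces $x\in N^{\Delta^i}\subseteq N$.
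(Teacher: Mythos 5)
Your proposal is correct and follows essentially the same route as the paper: decompose $N^{\Delta^i}$ as a finite product of the subgroups $N^{c}$ over the ideal generators $c=(a_{j_1}-1)\cdots(a_{j_i}-1)$ of $\Delta^i$, and realize each $f^{c}$ as the iterated commutator $[f,a_{j_1},\ldots,a_{j_i}]$. The only (harmless) differences are that you index the generators by tuples rather than multi-indices and omit the constraint $[y_{\bar j},b_1]=1$ forcing the witnesses into $N$, which you correctly justify as redundant.
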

\begin{proof}
    The ideal $\Delta^i$ is generated by finitely many products of the type
    $$
   c_\alpha = (a_1 -1)^{\alpha_1} \ldots (a_m -1)^{\alpha_m},
    $$
    where $\alpha = (\alpha_1, \ldots,\alpha_m)$, $ \alpha_i \in \mathbb{N}$ and $|\alpha| = \alpha_1+ \ldots \alpha_m = i$. Denote 
    $$
    \mathcal A_i = \{(\alpha_1, \ldots, \alpha_m) \in \N^m \mid \alpha_1 + \ldots + \alpha_m = i\}
    $$
    
    Hence 
$$
N^{\Delta^i} = N^{\Sigma_{\alpha \in \mathcal A_i} c_\alpha \Z A} = \Pi_{\alpha \in \mathcal A_i} N^{c_\alpha}.
$$
Note that 
$$
x \in N^{c_\alpha} \Longleftrightarrow \exists y ([y,b_1] = 1  \wedge x = [y,a_1,\ldots,a_1,  \ldots, a_m, \ldots a_m]_\alpha, 
$$
where $a_1$ appears $\alpha_1$ times in the commutator above, $a_2$ appears $\alpha_2$ times, and so on. We denote such a commutator by $[y_\alpha,a_1,\ldots,a_1, a_2, \ldots, a_m, \ldots a_m]_\alpha$
Denote 
$$
S_\alpha(x_\alpha,y_\alpha) = [y_\alpha,b_1] = 1 \wedge x_\alpha = [y_\alpha,a_1,\ldots,a_1, a_2, \ldots, a_m, \ldots a_m]_\alpha.
$$
 Then $x \in N^{\Delta^i}$ if and only if the following system in variables $x, x_\alpha, y_\alpha (\alpha \in \mathcal A_i)$ has a solution in $G$:
 $$
   x = \Pi_{\alpha \in \mathcal A_i} x_\alpha \bigwedge_{\alpha \in \mathcal A_i}  S_\alpha(x_\alpha,y_\alpha).
  $$

Hence the result.

\end{proof}

\begin{lemma} \label{le:4} 
Let $a \in A, a \neq 1,$ and $g \in G$. Then the following conditions are equivalent:
\begin{enumerate}
    \item [1)] $g$ belongs to the cyclic subgroup $\langle a\rangle$,

    \item [2)] $[g,a] = 1$ and there is $z \in N$ such that $[b_1,g] =  [z,a]$,
    \item [3)] $[g,a] = 1$ and for any $u \in N$ there exists $z \in N$ such that  $[u,g] =  [z,a]$.
\end{enumerate}
\end{lemma}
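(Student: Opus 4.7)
The plan is to prove the chain $(1) \Rightarrow (3) \Rightarrow (2) \Rightarrow (1)$, with the last implication being the substantive one. Throughout, I will exploit the module notation from Section \ref{se:free-abelian}: since $B$ is abelian, $N$ is a $\Z A$-module, and for $u \in N$ and $h \in A$ the commutator simplifies to $[u, h] = u^{h-1}$.

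For $(1) \Rightarrow (3)$, if $g = a^k$ then $g$ commutes with $a$ (since $A$ is abelian), and the identity $a^k - 1 = (a-1)(1 + a + \cdots + a^{k-1})$ (with the analogous formula for $k < 0$, obtained by multiplying by $-a^k$) yields $Q_k \in \Z A$ with $g - 1 = (a - 1) Q_k$. Then for any $u \in N$,
$$
[u, g] = u^{g - 1} = u^{(a-1) Q_k} = (u^{Q_k})^{a - 1} = [u^{Q_k}, a],
$$
so $z = u^{Q_k}$ does the job. The implication $(3) \Rightarrow (2)$ is immediate by specializing to $u = b_1$.

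For $(2) \Rightarrow (1)$, I would first use Lemma \ref{le:2}(3) to note that $[g, a] = 1$ combined with $a \neq 1$ forces $g \in C_G(a) = A$. Rewriting $[b_1, g] = [z, a]$ as an equation in $N$ gives $(g - 1) b_1 = (a - 1) z$. Since $N$ is a \emph{free} $\Z A$-module with basis $b_1, \ldots, b_n$, writing $z = \sum_{i=1}^{n} z_i b_i$ and comparing coefficients produces $g - 1 = (a - 1) z_1$ together with $(a - 1) z_i = 0$ for $i \geq 2$; as $\Z A$ is an integral domain and $a - 1 \neq 0$, the latter relations force $z_i = 0$ for $i \geq 2$. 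Hence $g - 1 \in (a - 1) \Z A$.

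The final step, which I expect to be the main obstacle, is to pass from the ring-theoretic condition $g - 1 \in (a - 1) \Z A$ (with $g \in A$) back to $g \in \langle a \rangle$. My plan is to introduce the ring homomorphism $\eta \colon \Z A \to \Z[A/\langle a \rangle]$ induced by the canonical projection $A \to A/\langle a \rangle$ and to verify that $\ker \eta = (a - 1) \Z A$. The inclusion $\supseteq$ is clear. For $\subseteq$, fix a set $\{x_\lambda\}$ of coset representatives of $\langle a \rangle$ in $A$; then $\Z A = \bigoplus_\lambda x_\lambda \cdot \Z[a, a^{-1}]$ as a $\Z[a, a^{-1}]$-module, and any $P \in \ker \eta$, written uniquely as $P = \sum_\lambda x_\lambda R_\lambda(a)$, must satisfy $R_\lambda(1) = 0$ for every $\lambda$; since the augmentation ideal of $\Z[a, a^{-1}]$ is principal and generated by $a - 1$, we get $R_\lambda = (a-1) S_\lambda$ and hence $P \in (a-1) \Z A$. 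Granted this, $g - 1 \in (a - 1) \Z A$ becomes $\eta(g) = 1$ in $\Z[A/\langle a \rangle]$, and because distinct group elements of $A/\langle a \rangle$ remain distinct in the group algebra, the image of $g$ in $A/\langle a \rangle$ is trivial, i.e.\ $g \in \langle a \rangle$.
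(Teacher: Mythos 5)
Your proof is correct, and the interesting implication $(2)\Rightarrow(1)$ is argued by a genuinely different route than the paper's. Both proofs agree on $(1)\Rightarrow(3)\Rightarrow(2)$ and on the reduction, via freeness of $N$ as a $\Z A$-module, to the ring-theoretic statement $g-1\in(a-1)\Z A$ with $g\in A$. From there the paper first applies an automorphism of $A$ to normalize $a$ to $a_1^{\beta}$, clears denominators to pass from Laurent polynomials to ordinary polynomials, and then runs a leading-term cancellation argument under the deglex monomial order to force $g$ to be a power of $a_1^{\beta}$. You instead identify $(a-1)\Z A$ as the kernel of the induced map $\eta\colon \Z A\to\Z[A/\langle a\rangle]$, proving the nontrivial inclusion via the coset decomposition $\Z A=\bigoplus_\lambda x_\lambda\,\Z[a,a^{-1}]$ and the principality of the augmentation ideal of $\Z[a,a^{-1}]$; then $\eta(g)=1$ together with the linear independence of group elements in a group algebra gives $g\in\langle a\rangle$. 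Your argument is shorter, avoids both the normalization step and the monomial-order bookkeeping, and is more robust (it uses only that $A$ is abelian and $a$ has infinite order, not the choice of basis); the paper's computation, on the other hand, extracts the explicit form of the quotient polynomial $Q_1$ as a geometric sum, which is more information than the lemma requires. One small point worth making explicit in your write-up: $\langle a\rangle$ is infinite cyclic because $A$ is torsion-free, so $\Z[\langle a\rangle]\cong\Z[a,a^{-1}]$ with no relations, which is what your appeal to the principal augmentation ideal relies on.
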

\begin{proof}
1) $\Longrightarrow$ 3). Let $g \in \langle a\rangle$, so $g = a^\gamma$ for some $\gamma \in \Z$. 
Clearly $[g,a] =1$. Fix $u \in N$. If $\gamma >0$ then  take 
$$
z = u^{a^{\gamma -1} + a^{\gamma -2} + \ldots + a +  1}.
$$
In this case 
$$
[u,g] = u^{a^\gamma}u^{-1} = u^{a^{\gamma} -1} = 
u^{({a^{\gamma -1} + a^{\gamma -2} + \ldots  +  1})(a-1)} = [z,a],
$$
 as claimed.  If $\gamma <0$ then take 
 $$
z = u^{\frac{ -(a^{|\gamma| -1} + a^{|\gamma| -2} + \ldots +  1)}{a^{|\gamma|}}}.
$$
The case $\gamma = 0$ is obvious. 

Obviously, 3) $\Longrightarrow$ 2). To prove the lemma it suffices to show that 2) $\Longrightarrow$ 1).  Let $g \in A$ such that 
\begin{equation} \label{eq:3}
[b_1,g] = [z,a]
\end{equation}
for some $g \in A$, $g \neq 1$ and $z \in N$. We need to show that $g=a^\gamma$ for some $\gamma \in \Z$. 

Note first that in a free abelian group $A$ there is an automorphism $\theta \in Aut A$ that maps $a$ to $a_1^\beta$ for some positive $\beta \in \N$. Replacing the basis $\{a_1, \ldots, a_m\}$ of $A$ with basis $\{\theta(a_1), \dots, \theta(a_m)\}$, if necessary, we may assume that $a = a_1^\beta$.

If $z = 1$ then $g=1 = a^0$. Suppose $z \neq 1$ and write $z$ as  $z = \Pi_i b_i^{Q_i}$ for some Laurent polynomials $Q_i \in \Z A$. Then  the equality (\ref{eq:3}) above takes the form 
$$
b_1^{g-1} = \Pi_i b_i^{Q_i(a_1^\beta-1)}.
$$
Since $b_1, \ldots,b_n$ is a basis of the free module $N$, one gets $Q_i = 0$ for all $i >1$. Hence 
$$
g-1 = Q_1(a_1^\beta-1)
$$
 in the ring $\Z A$. Since $g \in A$ it is a product $g = \Pi_{i =1} ^m a_i^{\gamma_i}$ for some $\gamma_i \in \Z$, we can write $g = \frac{g_1}{g_2}$, where $g_1$ is a product of all $a_i^{\gamma_i}$ with  $\gamma_i >0$ and $g_2$ is a product of all $a_i^{|\gamma_i|}$ with  $\gamma_i < 0$.  Note that $g_1$ and $g_2$ do not have common variables. 
 Now,
 $$
 \frac{g_1-g_2}{g_2} = Q_1(a_1^\beta-1)
 $$
and therefore $Q_1 = \frac{P_1}{g_2}$, where $P_1$ is an ordinary polynomial from $\Z[a_1, \ldots,a_m]$. Hence, we have an  equality 
\begin{equation} \label{eq:4}
    g_1 -g_2 = P_1(a_1^\beta-1)
\end{equation}
in the ring of polynomials $\Z[a_1, \ldots,a_m]$. Consider a monomial order in monomials of $\Z[a_1, \ldots,a_m]$, for example  the deglex order  were monomials are compared first by their degree and if they have the same degree then with respect to a lexicographical ordering. Write $P_1$ as a sum of terms $M_j$ (i.e., monomials with integer coefficients)
 $$
 P_1 = M_1 + \ldots + M_k
 $$
 where $M_1 > M_2 > \ldots M_k$ in the deglex order. Then the equation (\ref{eq:4}) becomes
 \begin{equation} \label{eq:5a}
     g_1-g_2 = M_1a_1^\beta -M_1 +M_2a_1^\beta - M_2 + \ldots + M_ka_1^\beta -M_k.
 \end{equation}
 Clearly, $M_1a_1^\beta$ is the leading term on the right and $-M_k$ is the smallest one, so they both cannot cancel out on the right-hand side. Then, assuming $g_1 > g_2$  we have $g_1 = M_1a_1^\beta$ and $g_2 = M_k$. Observe, that $g_1$ contains $a_1$ and $g_2$ does not.
  Now, the equality (\ref{eq:5a}), after canceling equal terms, turns into  
 $$
 M_1 - g_2 = M_2a_1^\beta - M_2 + \ldots -M_k.
 $$
which again has the  form (\ref{eq:5a}). By induction, $M_i = M_{i+1}a_1^\beta$ for all $i = 1, \ldots,k-1$. In particular, $g_2$ divides $g_1$. Since $g_1$ and $g_2$ do not have common divisors besides $1$ and $-1$, we deduce that $g_2 = \pm1$, and from $g = \frac{g_1}{g_2}$ we have $g_2 = 1$. It follows that $P_1 = a_1^{\beta(k-1)} + \ldots a_1^\beta +1$ and $g = g_1 = a^{k\beta}$ for some $k >0$. 
However, if we assume above that $g_2$ contains $a_1$. Then a similar argument shows that in this case $g_1 = 1$ and $g_2 = a_1^{k\beta}$ for some $k>0$, so $g = a_1^{-k\beta}$, as claimed.

\end{proof}

Two different but equivalent conditions 2) and 3) in Lemma \ref{le:4} allow one to use different formulas to define in $G$ the cyclic subgroup $\langle a \rangle$ for $a \in A$, which would be advantageous when studying the decidability of equations in $G$ or other questions when the complexity of formulas is essential. We record this in the following corollary.

\begin{cor} \label{co:1}  Let $a \in A, a \neq 1,$ and $g \in G$. Then   $g \in \langle a \rangle $ if and only if   the following system has a solution in $G$: 
$$
[g,a] = 1 \wedge [z,b_1] =1 \wedge [b_1,g] =  [z,a].
$$
\end{cor}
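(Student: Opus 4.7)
The plan is to derive this corollary directly by combining Lemma \ref{le:4} with Lemma \ref{le:3}. The equivalence of conditions 1) and 2) in Lemma \ref{le:4} already delivers the logical content of the statement: $g \in \langle a\rangle$ if and only if $[g,a]=1$ and there exists $z \in N$ with $[b_1,g]=[z,a]$. The only remaining task is to replace the set-theoretic quantifier ``$z \in N$'' by an equational condition, so that the whole characterization becomes a system of equations in $G$.

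First I would invoke Lemma \ref{le:3}(1), which shows that $N = C_G(b_1) = \{z \in G : [z,b_1]=1\}$, recalling that $b_1 \in B \subseteq N$ is nontrivial and that the centralizer of a nontrivial element of $B$ in $G$ equals $N$ (this is property 3) of Section \ref{se:general}, used to justify Lemma \ref{le:3}). Hence ``$z \in N$'' is equivalent inside $G$ to the single equation $[z,b_1]=1$.

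Next I would substitute this equational description of membership in $N$ into clause 2) of Lemma \ref{le:4}. This yields: $g \in \langle a\rangle$ if and only if there exists $z \in G$ such that $[g,a]=1$, $[z,b_1]=1$, and $[b_1,g]=[z,a]$ hold in $G$. This is precisely the solvability of the displayed system, so the corollary follows.

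There is essentially no obstacle here: the argument is purely a bookkeeping step that packages the existential quantifier over $N$ as an equation, using the already-proved definability of $N$ by a single commutator relation. The only point worth mentioning is that one should pick the same generator $b_1$ both for the defining equation of $N$ and for the equation $[b_1,g]=[z,a]$ from Lemma \ref{le:4}, which is consistent with the statement.
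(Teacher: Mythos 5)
Your proposal is correct and matches the paper's intent exactly: the corollary is recorded as an immediate consequence of the equivalence 1)\,$\Leftrightarrow$\,2) in Lemma \ref{le:4}, with the quantifier ``$z\in N$'' rewritten as the equation $[z,b_1]=1$ via Lemma \ref{le:3}(1). The paper gives no separate proof, and your bookkeeping argument is precisely the one it leaves implicit.
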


\subsection{Reduction of the Diophantine problem in $\Z$ to the Diophantine problem in $G$}

\begin{theorem} \label{th:reduction}
    For every polynomial $f$ in finitely many variables and coefficients in $\Z$ one can construct in polynomial time a finite system $S_f$ of group equations with coefficients in $G$ such that $f = 0$ has a solution in $\Z$ if and only if $S_f$ has a solution in $G$.
\end{theorem}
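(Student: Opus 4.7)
The plan is to e-interpret the ring $\Z$ into $G$. The underlying set will be the cyclic subgroup $\langle a_1\rangle$, which is Diophantine in $G$ by Corollary~\ref{co:1}; under the identification $s \leftrightarrow a_1^s$, addition is automatic since it coincides with the group operation on $\langle a_1\rangle$. The heart of the proof is a Diophantine encoding of integer multiplication, carried out modulo $N^{\Delta^3}$, which is Diophantine in $G$ by Lemma~\ref{le:4new}.

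The key claim is that for $x = a_1^s$, $y = a_1^t$, $z = a_1^r$ in $\langle a_1\rangle$, one has $r = st$ if and only if
$$
[b_1,x,y]\cdot [b_1,z,a_1]^{-1} \;\in\; N^{\Delta^3}.
$$
To prove it, I use the $\Z A$-module structure on $N$ (Lemma~\ref{le:1}): since $[b_1,v] = b_1^{v-1}$ for $v \in A$, iterating yields $[b_1,x,y] = b_1^{(x-1)(y-1)}$ and $[b_1,z,a_1] = b_1^{(z-1)(a_1-1)}$. The binomial expansion $a_1^k - 1 \equiv k(a_1-1) \pmod{\Delta^2}$ then gives $[b_1,x,y] \equiv [b_1,a_1,a_1]^{st}$ and $[b_1,z,a_1] \equiv [b_1,a_1,a_1]^r$ modulo $N^{\Delta^3} = G_4$ (using Lemma~\ref{le:1}(2)). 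Therefore, in the free abelian group $G_3/G_4$ (Lemma~\ref{le:1}(5)), the left-hand side of the displayed condition has image $[b_1,a_1,a_1]^{st-r}$. Since $[b_1,a_1,a_1]$ is a basis element of $G_3/G_4$ and hence has infinite order, this image vanishes iff $st = r$.

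With the multiplication identity in hand, the reduction is routine bookkeeping. Given $f(x_1,\dots,x_n) \in \Z[x_1,\dots,x_n]$, I first rewrite $f = 0$ in polynomial time as a finite conjunction of atomic equations over $\Z$ of the forms $u_j = u_k + u_\ell$, $u_j = u_k \cdot u_\ell$, and $u_j = c_j$, by introducing one auxiliary variable per subterm. Each integer variable $u_j$ is replaced by a group variable $U_j$ subject to the Diophantine condition $U_j \in \langle a_1\rangle$. An addition translates to the group equation $U_j = U_k U_\ell$; a constant assignment translates to $U_j = a_1^{c_j}$; and a multiplication translates, via the key claim and Lemma~\ref{le:4new}, into a finite system of group equations in $U_j, U_k, U_\ell$ and further auxiliary variables. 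The conjunction of all these subsystems is the required $S_f$, and $f = 0$ has a solution $(s_1,\dots,s_n)$ in $\Z$ iff $S_f$ has a solution in $G$, via $U_i = a_1^{s_i}$.

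The main obstacle is the ``only if'' direction of the multiplication identity, which requires that $[b_1,a_1,a_1]$ have infinite order in the quotient $G_3/G_4$. This is exactly the content of Lemma~\ref{le:1}(5), so the obstacle essentially dissolves into a verification; but it is the place where the structure of the lower central series of $G$ plays a genuinely essential role. Polynomial-time control of the size of $S_f$ is then routine, since the number of auxiliary variables is linear in the size of $f$ and each atomic translation has bounded size (with constants $a_1^{c_j}$ encoded symbolically).
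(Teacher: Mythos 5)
Your proposal is correct, and it takes a recognizably different route from the paper's. The paper does not first interpret the ring $\Z$ and then decompose $f$ into atomic operations; instead, for a candidate solution $(z_1,\dots,z_s)$ represented by $x_i=a^{z_i}\in\langle a\rangle$, it builds in one shot the module element $e_f=\sum_\alpha t_\alpha(a-1)^{d-|\alpha|}(x_1-1)^{\alpha_1}\cdots(x_s-1)^{\alpha_s}$, shows $e_f\equiv f(z)\,(a-1)^d \pmod{\Delta^{d+1}}$ where $d=\deg f$, and tests $f(z)=0$ via the single membership condition $b_1^{e_f}\in N^{\Delta^{d+1}}$ (Lemma~\ref{le:4new}); the ring interpretation of $\Z$ is then stated only as a corollary. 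Your version isolates a Diophantine definition of a single multiplication, using the congruence $(a_1^s-1)(a_1^t-1)\equiv st(a_1-1)^2\pmod{\Delta^3}$ and the freeness of $G_3/G_4=N^{\Delta^2}/N^{\Delta^3}$ from Lemma~\ref{le:1}, and then reduces $f=0$ to atomic equations with auxiliary variables. Both arguments rest on the same two pillars --- the expansion $a^k-1\equiv k(a-1)\pmod{\Delta^2}$ and the fact that $(a-1)^d\notin\Delta^{d+1}$ (equivalently, that the quotients $N^{\Delta^i}/N^{\Delta^{i+1}}$ are free abelian with the obvious basis), together with Corollary~\ref{co:1} and Lemma~\ref{le:4new}. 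What your route buys is modularity and a fixed depth of the lower central series ($N^{\Delta^3}$, independent of $f$), plus a direct e-interpretation of the ring $\Z$; what the paper's route buys is that no subterm decomposition or auxiliary integer variables are needed, at the cost of descending to depth $\deg f+1$. The only point worth flagging is the one you already note: writing the constants $a_1^{c_j}$ as literal words is exponential in the binary length of $c_j$, so for the polynomial-time claim one should either measure coefficients in unary or build $c_j$ from its binary expansion using your multiplication predicate; the paper's construction has the same (harmless for undecidability) issue with the coefficients $t_\alpha$.
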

\begin{proof}
    Fix $a =a_1 \in A$ and $ b = b_1 \in B$.  Let $f(z_1, \ldots,z_s) \in \Z[z_1, \ldots,z_s]$. Then $f$ is a finite sum 
    $$f = \Sigma_{\alpha \in \A} t_{\alpha}z^\alpha,$$
    where $0\neq t_\alpha \in \Z$, $\alpha = (\alpha_1, \ldots,\alpha_s) \in \N^s$, $z^\alpha = z_1^{\alpha_1} \ldots z_s^{\alpha_s}$, and $\A$ a finite subset of $\N^s$. Let $d = \max\{\alpha_1 + \ldots+ \alpha_s \mid \alpha = (\alpha_1, \ldots, \alpha_s) \in \Z\}$ be the maximal degree of all monomials $z^\alpha$ that appear in $f$.  With every term $t_\alpha z^\alpha$, $\alpha \in \A$, (here we view $z_1, \ldots,z_s$ as arbitrary integers), we associate an element from $\Z A$:
$$
e_\alpha = t_\alpha (a-1)^{d-\alpha_1-\ldots-\alpha_s}(x_1-1)^{\alpha_1} \ldots (x_s-1)^{\alpha_s},
$$
where $x_i \in \langle a\rangle$, hence $x_i = a^{z_i}$ for some $z_i \in \Z$,  for $i = 1, \ldots,s.$ 

If $z_i \geq 0$ then
$$
(a^{z_i} - 1)^{\alpha_i} = (a-1)^{\alpha_i}(1+a + \ldots a^{z_1-1})^{\alpha_i} = (a-1)^{\alpha_i}(z_i +(a-1)g_i)^{\alpha_i} = 
$$
$$
(a-1)^{\alpha_i} (z_i^{\alpha_i} +(a-1)h_i)
$$
for some polynomials $g_i, h_i \in \Z A$. 

If $z_i < 0$ then 
$$
(a^{z_i} - 1)^{\alpha_i} = ((a^{-1})^{|z_i|} - 1)^{\alpha_i} = (a^{-1}-1)^{\alpha_i} (|z_i|^{\alpha_i} +(a^{-1}-1)h_i) = 
$$
$$
\frac{(1-a)^{\alpha_i}}{a^{\alpha_i}}(|z_i|^{\alpha_i} +\frac{(1 -a)}{a}h_i) = \frac{(a-1)^{\alpha_i}}{a^{\alpha_i}}(z_i^{\alpha_i} +(a-1)\frac{-h_i}{a}).
$$
Hence for arbitrary $z_i \in \Z$ one has 
$$
(a^{z_i} - 1)^{\alpha_i} = \frac{(a-1)^{\alpha_i}}{a^{\gamma_i}}(z_i^{\alpha_i} +(a-1)h_i)
$$
for some non-negative integer $\gamma_i$ and some $h_i \in \Z A$.
Therefore,  
$$
e_\alpha = t_\alpha (a-1)^{d-\alpha_1-\ldots-\alpha_s}\Pi_{i=1}^s (a^{z_i} -1)^{\alpha_i} = t_\alpha z_1^{\alpha_1} \ldots z_s^{\alpha_s}\frac{(a-1)^d}{a^{\beta_\alpha}} + h_\alpha
$$
for some non-negative integer $\beta_\alpha$ and a polynomial $h_\alpha \in \Delta^{d+1}.$ Since $a^{-\beta_\alpha} = (a^{-\beta_\alpha} -1) +1$ and $(a^{-\beta_\alpha} -1) \in \Delta$ we have 
$$
e_\alpha  = t_\alpha z^\alpha a^{-{\beta_\alpha}} (a-1)^d + h_\alpha = t_\alpha z^\alpha  (a-1)^d + h_\alpha',
$$
for some  $h_\alpha' \in \Delta^{d+1}$. It follows then that 
$$
e_f = \Sigma_{\alpha \in \A} e_\alpha  = (\Sigma_{\alpha \in \A}t_\alpha z^\alpha ) (a-1)^d + h_f,
$$
for some $h_f \in \Delta^{d+1}$.

We claim that 
$$
e_f \in \Delta^{d+1} \Longleftrightarrow \Sigma_{\alpha \in \A}t_\alpha z^\alpha = f(z_1, \ldots,z_n) = 0
$$
Indeed, if $\Sigma_{\alpha \in \A}t_\alpha z^\alpha =0$ then  $e_f = h_f  \in \Delta^{d+1} $.

On the other hand, if 
$$
e_f = (\Sigma_{\alpha \in \A}t_\alpha z^\alpha ) (a-1)^d +h_f'\in \Delta^{d+1}
$$
then $\Sigma_{\alpha \in \A}t_\alpha z^\alpha = 0$ since $(a-1)^d \not \in \Delta^{d+1}$.

Now we are ready to construct a finite system of equations $S_f$ with coefficients in $G$ which has a solution in $G$ if and only if $f =0$ has a solution in $\Z$. Observe first, that since $b = b_1$ is a part of a free basis of $\Z A$-module $N$ one has 
$$
e_f \in \Delta^{d+1} \Longleftrightarrow b^{e_f} \in N^{\Delta^{d+1}}.
$$
It suffices now to show that the condition $b^{e_f} \in N^{\Delta^{d+1}}$ can be defined by a finite system of equations in $G$. 

As we have seen in Lemma \ref{le:4new} for an arbitrary $u \in N$ and $1 \neq x \in A$ one has 
$$
v = u^{(x-1)^k}  \Longleftrightarrow v = [u, \underbrace{x,\ldots,x}_k].
$$
By Corollary \ref{co:1}  The   condition $x_i \in \langle a\rangle$ holds if and only if  the following system has a solution in $G$: 
$$
[x_i,a] = 1 \wedge [z,b_1] =1 \wedge [b_1,x_i] =  [z,a].
$$
It is straightforward now to write down a finite system of group equations $S_\alpha(y,x_1, \ldots,x_s,a_1,b_1)$ such that  for an element $u_\alpha \in N$ 
$$
u_\alpha = b_1^{e_\alpha}
$$
if and only if the system $S_\alpha(u_\alpha,x_1, \ldots,x_s,a_1,b_1)$ holds in $G$ for the corresponding elements $x_1, \ldots,x_n \in \langle a \rangle$. Observe, that
$$
b_1^{e_f} = \Pi_{\alpha \in \A} b_1^{e_\alpha}.
$$

Hence the following  finite system of equations 
$$
S_f'(y,\bar y, \bar x, a_1,b_1)  = (y =\Pi_{\alpha \in \A} y_\alpha \bigwedge S_\alpha(y_\alpha,\bar x, a_1, b_1))
$$
holds in $G$ for $y = u$ if and only if $u = b_1^{e_f}$.

Finally, by Lemma \ref{le:4new} there is a finite system of equations $S_{d+1}(x,\bar x_\alpha, \bar y_\alpha, a,b)$ which holds in $G$ for $x = b_1^{e_f}$ and some values for tuples of variables $\bar x_\alpha$ and $\bar y_\alpha$ if and only if $b^{e_f} \in N^{\Delta^{d+1}}$.  

This finishes the proof.

\end{proof}

\begin{theorem}  \label{metab}  The Diophantine problem in  $\Z^m \wr \Z^n$, $m,n \geq 1$,  is undecidable. 
\end{theorem}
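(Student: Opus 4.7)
The proof plan is an immediate application of Theorem \ref{th:reduction} combined with the Matiyasevich--Davis--Putnam--Robinson theorem (the negative solution of Hilbert's Tenth Problem), which asserts that the Diophantine problem over $\Z$ is undecidable: no algorithm can decide, given $f \in \Z[z_1,\ldots,z_s]$, whether $f = 0$ has an integer solution.

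First I would observe that although Theorem \ref{th:reduction} is formulated in the setting $G = B \wr A$ of Section \ref{se:free-abelian}, its proof only uses a single basis element $a = a_1 \in A$ and a single basis element $b = b_1 \in B$. Hence the reduction it produces goes through in any iterated wreath product $\Z^m \wr \Z^n$ provided $m, n \geq 1$. That theorem thus supplies, for every $f \in \Z[z_1,\ldots,z_s]$, a finite system $S_f$ of group equations with coefficients in $G$, constructible from $f$ in polynomial time, such that $f = 0$ has a solution in $\Z$ if and only if $S_f$ has a solution in $G$.

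Now suppose for contradiction that the Diophantine problem in $G$ were decidable. Composing the computable map $f \mapsto S_f$ with a decision procedure for solvability of $S_f$ in $G$ would yield an algorithm that decides whether $f = 0$ has an integer solution, contradicting Matiyasevich's theorem. Therefore the Diophantine problem in $G = \Z^m \wr \Z^n$ is undecidable for all $m, n \geq 1$.

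The only nontrivial work has already been carried out in Theorem \ref{th:reduction}, where integer arithmetic is encoded inside commutators of elements of the $\Z A$-module $N$ modulo $N^{\Delta^{d+1}}$ with $d$ the degree of $f$. From this point there is essentially no obstacle; the final theorem is a one-line corollary, and in fact the argument shows more, namely that the Diophantine problem in $G$ is at least as hard as Hilbert's Tenth Problem under Karp reducibility.
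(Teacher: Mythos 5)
Your proposal is correct and matches the paper's proof exactly: Theorem \ref{th:reduction} is already stated for $G = B \wr A$ with $A, B$ free abelian of arbitrary finite ranks $m, n \geq 1$, so combining it with the undecidability of Hilbert's Tenth Problem over $\Z$ gives the result immediately. The extra remarks about the reduction using only $a_1$ and $b_1$ are accurate but not needed, since the theorem's hypotheses already cover the general case.
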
 
\begin{proof}
    The result follows from Theorem \ref{th:reduction} and undecidability of the Diophantine problem in $\Z$.
\end{proof}

\begin{cor}
    The ring $\Z$ is interpretable in $G$ by Diophantine formulas with coefficients in $G$.
\end{cor}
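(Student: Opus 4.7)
The plan is to e-interpret the ring $\Z$ in $G$ using the cyclic subgroup $\langle a_1 \rangle$ as the domain of codes for integers, with coordinate map $\varphi \colon a_1^z \mapsto z$. All clauses of e-interpretability will follow from results already established, so the proof will mainly be an assembly job.

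First take $D = \langle a_1 \rangle \subseteq G$ with $\varphi(a_1^z) = z$. By Corollary \ref{co:1} the set $D$ is Diophantine in $G$ with coefficients $a_1, b_1$, and since $A$ is a free abelian group of positive rank, $a_1$ has infinite order, so $\varphi$ is a bijection. The preimage of the equality relation on $\Z$ is just the restriction of group equality to $D^2$, defined by $x = y$ together with $x, y \in D$. The preimage of the addition graph $\{(z_1, z_2, z_3) : z_1 + z_2 = z_3\}$ is $\{(x, y, z) \in D^3 : xy = z\}$, defined by the equation $xy = z$ and the Diophantine constraints $x, y, z \in D$. The constants $0, 1 \in \Z$ have preimages $\{1_G\}$ and $\{a_1\}$, defined by $x = 1$ and $x = a_1$ respectively.

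The substantive clause is multiplication. For this I would apply Theorem \ref{th:reduction} to the polynomial $f(z_1, z_2, z_3) = z_1 z_2 - z_3 \in \Z[z_1, z_2, z_3]$. Its proof produces a finite system $S_f$ over $G$ whose variables include three distinguished ones $x_1, x_2, x_3$, each constrained (via Corollary \ref{co:1}) to lie in $\langle a_1 \rangle$, together with auxiliary variables encoding the elements $b_1^{e_\alpha}$ and their product $b_1^{e_f}$, plus the witnesses certifying that $b_1^{e_f} \in N^{\Delta^{d+1}}$ (supplied by Lemma \ref{le:4new}). Inspecting the construction, once $x_i = a_1^{z_i}$ is fixed the auxiliary data exists in $G$ if and only if $b_1^{e_f} \in N^{\Delta^{d+1}}$, and this last condition is equivalent to $f(z_1, z_2, z_3) = 0$, i.e., $z_1 z_2 = z_3$. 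Hence the preimage of the multiplication graph under $\varphi$ is Diophantine in $G$.

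The only point that needs a small extra word of care, and is the main obstacle one has to check, is that Theorem \ref{th:reduction} gives the uniform equivalence for the fixed tuple $(x_1, x_2, x_3)$, not merely the existential equivalence ``$S_f$ is solvable iff $f$ has a root.'' This is immediate from the proof: the auxiliary variables are either functionally determined by $(x_1, x_2, x_3)$ (the elements $b_1^{e_\alpha}$ and their product $b_1^{e_f}$ are literally conjugates of $b_1$ by polynomials in $a_1$ and the $x_i$) or are witnesses for membership in $N^{\Delta^{d+1}}$, and this membership depends only on $z_1, z_2, z_3$. With that verification, all five clauses of the definition of e-interpretability are established and the corollary follows.
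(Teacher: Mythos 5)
Your proposal is correct and matches the argument the paper intends: the corollary is stated without proof, but it is clearly meant to follow from the construction in Theorem \ref{th:reduction} exactly as you assemble it, with $D=\langle a_1\rangle$ as the set of codes, addition read off from the group operation, and multiplication obtained from the system $S_f$ for $f=z_1z_2-z_3$. You also correctly identify and resolve the one point needing care, namely that the proof of Theorem \ref{th:reduction} yields the parametric equivalence for fixed $x_1,x_2,x_3\in\langle a_1\rangle$ and not merely the existential one.
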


\subsection{The Diophantine problem in iterated wreath products of free abelian groups}

Let $A_1, \ldots, A_k$ be groups. We define the \emph{right iterated wreath product } $IWP_R(A_k, \ldots,A_1)$ by induction: $IPW_R(A_2,A_1) = A_2 \wr A_1$ and 
$$
IWP_R(A_k, \ldots,A_1) = A_k \wr IWP_R(A_{k-1}, \ldots,A_1).
$$

\begin{theorem} \label{th:general}
    Let $A_1, \ldots,A_k$ be non-trivial free abelian groups of finite ranks, $k \geq 2$.  Then the Diophantine problem in $G = IWP_R(A_k, \ldots,A_1)$ is undecidable.
\end{theorem}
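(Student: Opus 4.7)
The plan is a straightforward induction on $k\geq 2$, using Theorem \ref{metab} as the base case and Theorem \ref{th:3.1} to drive the inductive step. Since all the heavy lifting has already been done, no new combinatorial or ring-theoretic argument should be required.

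For the base case $k=2$, we have $G = IWP_R(A_2, A_1) = A_2 \wr A_1$, and since $A_1, A_2$ are non-trivial free abelian of finite ranks, this is precisely a group of the form $\Z^{m_2} \wr \Z^{m_1}$ with $m_1, m_2 \geq 1$. By Theorem \ref{metab} the Diophantine problem in such a group is undecidable.

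For the inductive step, assume the result for $k-1$, so the Diophantine problem in $H = IWP_R(A_{k-1}, \ldots, A_1)$ is undecidable. By definition of the right iterated wreath product,
\[
G = IWP_R(A_k, A_{k-1}, \ldots, A_1) = A_k \wr IWP_R(A_{k-1}, \ldots, A_1) = A_k \wr H.
\]
The group $A_k$ is free abelian, hence in particular abelian. Therefore Theorem \ref{th:3.1}(1) applies with $K = A_k$ and this $H$, giving that the Diophantine problem in $G = K \wr H$ is undecidable. This completes the induction.

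There is no real obstacle: the two nontrivial ingredients, namely the undecidability in $\Z^m \wr \Z^n$ (which required the detailed polynomial encoding through the augmentation ideal in the previous subsection) and the abelian-top wreath reduction (which used that $N = C_G(b)$ is Diophantine and then invoked Lemma \ref{le:normal-sub} together with Corollary \ref{cor:1}), have already been proved in full generality. The only mild bookkeeping point is verifying that the parenthesization $A_k \wr (A_{k-1} \wr (\cdots))$ in the statement matches the inductive definition of $IWP_R$ used in Theorem \ref{th:3.1}, but this is immediate from the definition preceding the theorem.
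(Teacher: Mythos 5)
Your proof is correct and follows exactly the same route as the paper: induction on $k$, with the base case $k=2$ given by Theorem \ref{metab} and the inductive step supplied by Theorem \ref{th:3.1}(1) applied to $G = A_k \wr H$ with $A_k$ abelian. No gaps.
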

\begin{proof}
     If $k = 2$ then the result follows from  Theorem \ref{metab}. Assume that $k >2$. Then $G \simeq A_k \wr H$, where $H = IWP_R(A_{k-1}, \ldots,A_1)$.  By induction the Diophantine problem in $H$ is undecidable. Since $A_k$ is abelian the result follows from Theorem \ref{th:3.1}.
\end{proof}

\end{document}